\newcommand{\menge}[2]{\big\{{#1} \mid {#2}\big\}} 
\newcommand{\psscal}[2]{\langle\langle\langle{#1}\mid{#2}%
\rangle\rangle\rangle}
\newcommand{\emp}{\ensuremath{{\varnothing}}}
\newcommand{\infconv}{\ensuremath{\mbox{\small$\,\square\,$}}}
\newcommand{\scal}[2]{\left\langle{#1}\mid {#2} \right\rangle}
\newcommand{\vuo}{\ensuremath{\mbox{\footnotesize$\square$}}}
\newcommand{\HH}{\ensuremath{\mathcal H}}
\newcommand{\GG}{\ensuremath{\mathcal G}}
\newcommand{\BL}{\ensuremath{\EuScript B}\,}
\newcommand{\BP}{\ensuremath{\EuScript P}}
\newcommand{\KKK}{\ensuremath{\boldsymbol{\mathcal K}}}
\newcommand{\RP}{\ensuremath{\left[0,+\infty\right[}}
\newcommand{\RPP}{\ensuremath{\,\left]0,+\infty\right[}}
\newcommand{\NN}{\ensuremath{\mathbb N}}
\newcommand{\dom}{\ensuremath{\operatorname{dom}}}
\newcommand{\inte}{\ensuremath{\operatorname{int}}}
\newcommand{\ran}{\ensuremath{\operatorname{ran}}}
\newcommand{\zer}{\ensuremath{\operatorname{zer}}}
\newcommand{\gra}{\ensuremath{\operatorname{gra}}}
\newcommand{\vv}{\ensuremath{\boldsymbol{v}}}
\newcommand{\uu}{\ensuremath{\boldsymbol{u}}}
\newcommand{\zz}{\ensuremath{\boldsymbol{z}}}
\newcommand{\pp}{\ensuremath{\boldsymbol{p}}}
\newcommand{\qq}{\ensuremath{\boldsymbol{q}}}
\newcommand{\xx}{\ensuremath{\boldsymbol{x}}}
\newcommand{\yy}{\ensuremath{\boldsymbol{y}}}
\newcommand{\ee}{\ensuremath{\boldsymbol{e}}}
\newcommand{\bb}{\ensuremath{\boldsymbol{b}}}
\newcommand{\cc}{\ensuremath{\boldsymbol{c}}}
\newcommand{\dd}{\ensuremath{\boldsymbol{d}}}
\newcommand{\aaa}{\ensuremath{\boldsymbol{a}}}
\newcommand{\xxx}{\ensuremath{\bar{\boldsymbol{x}}}}
\newcommand{\ww}{\ensuremath{\boldsymbol{w}}}
\newcommand{\BB}{\ensuremath{\boldsymbol{B}}}
\newcommand{\UU}{\ensuremath{\boldsymbol{U}}}
\newcommand{\AAA}{\ensuremath{\boldsymbol{A}}}
\newcommand{\BBB}{\ensuremath{\boldsymbol{B}}}
\newcommand{\Id}{\ensuremath{\operatorname{Id}}}
\newcommand{\IId}{\ensuremath{\boldsymbol{\operatorname{Id}}}}
\newcommand{\weakly}{\ensuremath{\rightharpoonup}}
\newcommand{\pinf}{\ensuremath{+\infty}}
\newtheorem{theorem}{Theorem}[section]
\newtheorem{corollary}[theorem]{Corollary}
\theoremstyle{plain}{\theorembodyfont{\rmfamily}
}
\theoremstyle{plain}{\theorembodyfont{\rmfamily}
}
\theoremstyle{plain}{\theorembodyfont{\rmfamily}
}
\theoremstyle{plain}{\theorembodyfont{\rmfamily}
\newtheorem{example}[theorem]{Example}}
\theoremstyle{plain}{\theorembodyfont{\rmfamily}
\newtheorem{problem}[theorem]{Problem}}
\theoremstyle{plain}{\theorembodyfont{\rmfamily}
\newtheorem{remark}[theorem]{Remark}}
\theoremstyle{plain}{\theorembodyfont{\rmfamily}
}
\numberwithin{equation}{section}
\begin{document}
\title{\sffamily A variable metric extension of the 
forward--backward--forward algorithm for monotone operators
\footnote{This work  was partially supported by Grant
102.01-2012.15  of the Vietnam National Foundation  for Science and
Technology Development (NAFOSTED).
}}
\author{ B$\grave{\text{\u{a}}}$ng C\^ong V\~u\\[5mm]
\small UPMC Universit\'e Paris 06\\
\small Laboratoire Jacques-Louis Lions -- UMR CNRS 7598\\
\small 75005 Paris, France\\[2mm]
\small\url{vu@ljll.math.upmc.fr} \\
}
\date{~}
\maketitle
\begin{abstract}
We propose a  variable metric extension of the
forward--backward-forward algorithm for finding a zero of the sum of
a maximally monotone operator and a monotone Lipschitzian operator
in Hilbert spaces.  In turn, this framework provides a variable
metric splitting algorithm for solving monotone inclusions involving
sums of composite operators. Monotone operator splitting methods 
recently proposed in the literature are recovered as special cases. 
\end{abstract}

{\bf Keywords}: variable metric,
composite operator,
duality,
monotone inclusion,
monotone operator,
operator splitting,
primal-dual algorithm

{\bf Mathematics Subject Classifications (2010)} 
47H05, 49M29, 49M27, 90C25 

\section{Introduction}

A basic problem in applied monotone operator theory is to find
a zero of a maximally monotone operator $A$ on a real Hilbert space $\HH$.
This problem can be solved by
the proximal point algorithm proposed in~\cite{Rock76} which requires only  
the resolvent of $A$, provided it is easy to implement numerically. 
In order to get more efficient proximal algorithms, 
some authors have proposed the use of variable 
metric or preconditioning in such algorithms
\cite{Bonn95,Burke99,Burke00,Guad2012,Lemarechal97a,solodov08,Qi95}.

\noindent
This problem was then extended to the problem of finding a zero of the sum of
a maximally monotone operator $A$ and a cocoercive operator $B$
(i.e., $B^{-1}$ is strongly monotone). In such instances, 
the forward-backward splitting algorithm \cite{plc2010,Siop1,mercier79,Tseng91} 
can be used.
Recently, this algorithm has been investigated in the context of variable metric \cite{Varm12}. 
In the case when $B$ is only Lipschitzian and not cocoercive, 
the problem can be solved by the
forward-backward-forward splitting algorithm \cite{siop2,Tseng00}. 
New applications of this basic algorithm to more complex 
monotone inclusions are presented in \cite{siop2,plc6}.

In the present paper, we propose a variable metric version of 
the forward-backward-forward splitting
algorithm. In
Section \ref{s:nb}, we recall notation and background on convex analysis 
and monotone operator theory. In Section \ref{s:vmfbf},
we present our variable metric forward-backward-forward splitting 
algorithm. In
Section \ref{s:vmpdfbf}, the results of Section \ref{s:vmfbf}
are used to develop a variable metric primal--dual algorithm for solving 
the type of composite inclusions considered in \cite{plc6}. 

\section{Notation and background }
\label{s:nb}
Throughout, $\HH$, $\GG$, and $(\GG_i)_{1\leq i\leq m}$ are real 
Hilbert spaces. Their scalar products and  associated norms are respectively denoted 
by $\scal{\cdot}{\cdot}$ and  $\|\cdot\|$. 
We denote by $\BL(\HH,\GG)$ the space of bounded linear operators 
from $\HH$ to $\GG$. The adjoint of $L\in\BL(\HH,\GG)$ is denoted by $L^*$.
We set $\BL(\HH)=\BL(\HH,\HH)$.
The symbols $\weakly$ and $\to$ denote respectively weak and strong 
convergence, and $\Id$ denotes the identity operator,
 and $B(x;\rho)$ denotes the
closed ball of center $x\in\HH$ and radius $\rho\in\RPP$. 
The interior of $C\subset\HH$ is denoted by $\inte C$.
We denote  
by $\ell_+^1(\NN)$ the set of summable sequences in 
$\RP$.

Let $M_1$ and $M_2$ be self-adjoint operators in $\BL(\HH)$, we write 
$
M_1\succcurlyeq M_2\;\text{if and only if}\;(\forall x\in\HH)\;
\scal{M_1x}{x}\geq\scal{M_2x}{x}.
$
Let $\alpha\in\left]0,+\infty\right[$. We set
\begin{equation}
\BP_{\alpha}(\HH)=\menge{M\in\BL(\HH)}{M^* =M \quad \text{and}\quad M\succcurlyeq\alpha\Id}.
\end{equation}
 Moreover, for every 
$M\in\BP_\alpha(\HH)$, we define respectively a scalar product and a norm by
\begin{equation}
\label{Unorm}
(\forall x\in\HH)(\forall y\in\HH)\quad\scal{x}{y}_M=\scal{Mx}{y}
\quad\text{and}\quad\|x\|_M=\sqrt{\scal{Mx}{x}}.
\end{equation}
Let $A\colon\HH\to 2^{\HH}$ be a set-valued operator.
The domain is $\dom A=\menge{x\in\HH}{Ax\neq\emp}$,
 and the graph of $A$ is 
$\gra A=\menge{(x,u)\in\HH\times\HH}{u\in Ax}$.
The set of zeros 
of $A$ is  $\zer A=\menge{x\in\HH}{0\in Ax}$, and the range of $A$ is
$\ran A=\menge{u\in\HH}{(\exists\; x\in\HH)\;u\in Ax}$. 
The inverse of $A$ is $A^{-1}\colon\HH\mapsto 2^{\HH}\colon u\mapsto 
\menge{x\in\HH}{u\in Ax}$, and the resolvent of $A$ is
\begin{equation}
\label{e:resolvent}
J_A=(\Id+A)^{-1}.
\end{equation}
Moreover, $A$ is monotone if 
\begin{equation}
(\forall(x,y)\in\HH\times\HH)
(\forall(u,v)\in Ax\times Ay)\quad\scal{x-y}{u-v}\geq 0,
\end{equation}
and maximally monotone if it is monotone and there exists no 
monotone operator 
$B\colon\HH\to2^\HH$ such that $\gra A\subset\gra B$ and $A\neq B$.
We say that $A$ is uniformly monotone 
at $x\in\dom A$ if there exists an 
increasing function $\phi_A\colon\left[0,+\infty\right[\to 
\left[0,+\infty\right]$ vanishing only at $0$ such that 
\begin{equation}\label{oioi}
\big(\forall u\in Ax\big)\big(\forall (y,v)\in\gra A\big)
\quad\scal{x-y}{u-v}\geq\phi_A(\|x-y\|).
\end{equation}

\section{Variable metric forward-backward-forward splitting algorithm}
\label{s:vmfbf}
The forward-backward-forward splitting algorithm 
was first proposed in \cite{Tseng00} to solve inclusion involving
the sum of a maximally monotone operator and a monotone Lipschitzian  operator. 
In \cite{siop2}, it was revisited to include computational errors. 
Below, we extend it to a variable metric setting. 
\begin{theorem}
\label{l:Tsenglemma}
Let $\KKK$ be a real Hilbert space with the scalar product $\psscal{\cdot}{\cdot}$
and the associated norm $||||\cdot||||$.
Let $\alpha$ and $\beta$ be in $ \left]0,+\infty\right[$, 
let $(\eta_n)_{n\in\NN}$ be a sequence in $\ell_{+}^1(\NN)$,
and let $(\UU_n)_{n\in\NN}$ 
be a sequence in $\BL(\KKK)$ such that 
\begin{equation}
\label{e:tse}
\mu =\sup_{n\in\NN} \|\UU_n\| < +\infty\quad 
\text{and}\quad (1+\eta_n)\UU_{n+1} \succcurlyeq \UU_n\in\BP_{\alpha}(\KKK).
\end{equation}
Let $\AAA\colon\KKK\to 2^{\KKK}$ be maximally monotone,
let $\BB\colon\KKK\to \KKK$ be a monotone and $\beta$-Lipschitzian operator on
$\KKK$ such that $\zer(\AAA+\BB) \not= \emp$.
Let $(\aaa_n)_{n\in\NN}$, $(\bb_n)_{n\in\NN}$, and $(\cc_n)_{n\in\NN}$ 
be absolutely summable sequences in $\KKK$.
Let $\xx_0\in \KKK$, let $\varepsilon \in \left]0, 1/(\beta\mu +1)\right[$,
let $(\gamma_n)_{n\in\NN}$ be a sequence 
in $\left[\varepsilon, (1-\varepsilon)/(\beta\mu)\right]$, and set
\begin{equation}
\label{e:Tsenga}
(\forall n\in\NN)\quad
\begin{array}{l}
\left\lfloor
\begin{array}{l}
\yy_n= \xx_n-\gamma_n\UU_n(\BB \xx_n+\aaa_n)\\[1mm]
\pp_n = J_{\gamma_n \UU_n \AAA}\yy_n + \bb_n\\
\qq_n = \pp_n -\gamma_n \UU_n(\BB\pp_n+\cc_n)\\
\xx_{n+1} = \xx_n -\yy_n +\qq_n.
\end{array}
\right.\\[2mm]
\end{array}
\end{equation}
Then the following hold for some $\overline{\xx}\in\zer(\AAA+\BB)$.
\begin{enumerate}
 \item
\label{Tsengi} $\sum_{n\in\NN}||||\xx_n-\pp_n||||^2 < \pinf$ and 
$\sum_{n\in\NN}||||\yy_n-\qq_n||||^2 < \pinf$.
\item \label{Tsengii}
$\xx_n \weakly \overline{\xx}$ and 
$\pp_n \weakly \overline{\xx}$.
\item\label{Tsengiii} Suppose that one of the following is satisfied:
\begin{enumerate}
\item \label{Tsengiiia}$\varliminf d_{\zer(\AAA+\BB)}(\xx_n) =0$.
 \item\label{Tsengiiib} $\AAA + \BB$ is demiregular 
(see \cite[Definition 2.3]{plc2010}) at $\overline{\xx}$.
\item \label{Tsengiiic}$\AAA$ or $\BBB$ is uniformly monotone at $\overline{\xx}$.
\item \label{Tsengiiid}$\inte \zer(\AAA+\BB)\neq\emp$ 
and there exists $(\nu_n)_{n\in\NN}\in\ell_{+}^{1}(\NN)$
such that $(\forall n\in\NN)\; (1+\nu_n) \UU_n \succeq \UU_{n+1}$.
\end{enumerate}
Then $\xx_n\to\overline{\xx}$ and $\pp_n\to\overline{\xx}$.
\end{enumerate}
\end{theorem}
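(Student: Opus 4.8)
The plan is to reinterpret iteration \eqref{e:Tsenga} as a perturbed Tseng step in the renormed space $(\KKK,\scal{\UU_n^{-1}\cdot}{\cdot})$ and then feed the result into a variable-metric quasi-Fej\'er analysis. First I would extract from the backward step its defining inclusion: writing $\pp_n'=\pp_n-\bb_n=J_{\gamma_n\UU_n\AAA}\yy_n$, the definition of the resolvent gives $\yy_n-\pp_n'\in\gamma_n\UU_n\AAA\pp_n'$, hence $a_n:=\gamma_n^{-1}\UU_n^{-1}(\yy_n-\pp_n')\in\AAA\pp_n'$ (here $\UU_n$ is invertible with $\UU_n^{-1}\in\BP_{1/\mu}(\KKK)$, since $\alpha\Id\preccurlyeq\UU_n\preccurlyeq\mu\Id$). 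The decisive observation is that, in the metric $\norm{\cdot}_{\UU_n^{-1}}$, the operator $\UU_n\AAA$ is maximally monotone and $\UU_n\BB$ is monotone and $\mu\beta$-Lipschitzian, and the three inner lines of \eqref{e:Tsenga} are exactly the forward--backward--forward step of stepsize $\gamma_n$ for the pair $(\UU_n\AAA,\UU_n\BB)$, up to the perturbations $\aaa_n,\bb_n,\cc_n$. Since $\gamma_n\mu\beta\le 1-\varepsilon<1$, the Tseng identity $\xx_{n+1}=\xx_n-(\yy_n-\qq_n)$ together with monotonicity of $\AAA+\BB$ yields, for every $\zz\in\zer(\AAA+\BB)$, a fundamental estimate of the form
\begin{equation*}
\norm{\xx_{n+1}-\zz}_{\UU_n^{-1}}^2\le\norm{\xx_n-\zz}_{\UU_n^{-1}}^2-\delta\norm{\xx_n-\pp_n}^2+\varepsilon_n(\zz),
\end{equation*}
where $\delta>0$ depends only on $\varepsilon,\mu$ and $(\varepsilon_n(\zz))_{n\in\NN}$ collects the perturbation cross-terms.

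Next I would convert this into a genuine quasi-Fej\'er recursion. Inverting the spectral hypothesis \eqref{e:tse}: from $(1+\eta_n)\UU_{n+1}\succcurlyeq\UU_n$ and the order-reversing property of inversion on positive operators one gets $(1+\eta_n)\UU_n^{-1}\succcurlyeq\UU_{n+1}^{-1}$, whence $\norm{\cdot}_{\UU_{n+1}^{-1}}^2\le(1+\eta_n)\norm{\cdot}_{\UU_n^{-1}}^2$. Applying this to $\xx_{n+1}-\zz$ turns the fundamental estimate into
\begin{equation*}
\norm{\xx_{n+1}-\zz}_{\UU_{n+1}^{-1}}^2\le(1+\eta_n)\norm{\xx_n-\zz}_{\UU_n^{-1}}^2-(1+\eta_n)\delta\norm{\xx_n-\pp_n}^2+(1+\eta_n)\varepsilon_n(\zz).
\end{equation*}
Setting $W_n=\UU_n^{-1}$, the sequence $(W_n)_{n\in\NN}$ lies in $\BP_{1/\mu}(\KKK)$, is norm-bounded by $1/\alpha$, and satisfies $(1+\eta_n)W_n\succcurlyeq W_{n+1}$ with $(\eta_n)_{n\in\NN}\in\ell_+^1(\NN)$, which is exactly the standing hypothesis of the variable-metric quasi-Fej\'er theory of \cite{Varm12}. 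The delicate point is that $\varepsilon_n(\zz)$ is not a priori summable: it appears in the form $\xi_n(1+\norm{\xx_n-\zz})$ with $\sum_n\xi_n<\pinf$, so I would first show via a standard sequence lemma that $(\norm{\xx_n-\zz}_{W_n})_{n\in\NN}$ is bounded and convergent and, simultaneously, that $\sum_n\norm{\xx_n-\pp_n}^2<\pinf$. This gives the first half of \ref{Tsengi}; the second half follows from $\yy_n-\qq_n=(\xx_n-\pp_n)-\gamma_n\UU_n(\BB\xx_n-\BB\pp_n)+(\text{summable})$, the $\beta$-Lipschitz continuity of $\BB$ and $\norm{\UU_n}\le\mu$, which give $\norm{\yy_n-\qq_n}\le C\norm{\xx_n-\pp_n}+(\text{summable})$.

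For \ref{Tsengii} I would produce an explicit certificate of asymptotic regularity: set $u_n=a_n+\BB\pp_n'\in(\AAA+\BB)\pp_n'$ and compute
\begin{equation*}
u_n=\tfrac{1}{\gamma_n}\UU_n^{-1}(\xx_n-\pp_n')-(\BB\xx_n-\BB\pp_n')-\aaa_n.
\end{equation*}
Since $\norm{\xx_n-\pp_n'}\le\norm{\xx_n-\pp_n}+\norm{\bb_n}\to0$ (from \ref{Tsengi}), $\gamma_n\ge\varepsilon$, $\norm{\UU_n^{-1}}\le 1/\alpha$, $\BB$ is $\beta$-Lipschitzian, and $\aaa_n\to0$, we obtain $u_n\to0$ strongly. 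Let $\overline{\xx}$ be any weak sequential cluster point of $(\xx_n)_{n\in\NN}$; then $\pp_n'\weakly\overline{\xx}$ along the same subsequence because $\norm{\xx_n-\pp_n'}\to0$. As $\AAA+\BB$ is maximally monotone (a maximally monotone operator plus a monotone Lipschitzian operator with full domain), its graph is sequentially closed in the weak--strong topology, so $u_n\in(\AAA+\BB)\pp_n'$ and $u_n\to0$ force $0\in(\AAA+\BB)\overline{\xx}$. Since $(\norm{\xx_n-\zz}_{W_n})_{n\in\NN}$ converges for every such $\zz$ and $(W_n)_{n\in\NN}$ converges pointwise to some $W\in\BP_{1/\mu}(\KKK)$, the variable-metric version of Opial's lemma \cite{Varm12} gives a unique cluster point; hence $\xx_n\weakly\overline{\xx}$, and $\xx_n-\pp_n\to0$ yields $\pp_n\weakly\overline{\xx}$.

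Finally, \ref{Tsengiii} upgrades weak to strong convergence in each case. Condition \ref{Tsengiiia} works because a variable-metric quasi-Fej\'er sequence with $\varliminf d_{\zer(\AAA+\BB)}(\xx_n)=0$ converges strongly to its weak limit. Condition \ref{Tsengiiib} is immediate from demiregularity applied to $\pp_n'\weakly\overline{\xx}$, $u_n\in(\AAA+\BB)\pp_n'$, $u_n\to0\in(\AAA+\BB)\overline{\xx}$, giving $\pp_n'\to\overline{\xx}$ and hence $\xx_n\to\overline{\xx}$. For \ref{Tsengiiic}, writing $-\BB\overline{\xx}\in\AAA\overline{\xx}$ and combining uniform monotonicity of $\AAA$ at $\overline{\xx}$ with monotonicity of $\BB$ gives $\phi_{\AAA}(\norm{\pp_n'-\overline{\xx}})\le\scal{\pp_n'-\overline{\xx}}{u_n}\le\norm{\pp_n'-\overline{\xx}}\,\norm{u_n}\to0$, so $\pp_n'\to\overline{\xx}$ (the argument for $\BBB$ is symmetric). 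Condition \ref{Tsengiiid} uses the extra two-sided bound $(1+\nu_n)\UU_n\succcurlyeq\UU_{n+1}$, equivalently $(1+\nu_n)W_{n+1}\succcurlyeq W_n$, which together with $\inte\zer(\AAA+\BB)\neq\emp$ places us in the regime where Fej\'er monotone sequences toward a set with nonempty interior converge strongly. I expect the main obstacle to be the second paragraph: carrying the three perturbation sequences through the Tseng identity in a moving metric while keeping the residuals in the exact form $\xi_n(1+\norm{\xx_n-\zz})$ demanded by the variable-metric quasi-Fej\'er lemma, and getting the direction of every spectral inequality right after inversion.
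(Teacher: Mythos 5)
Your proposal is correct and follows essentially the same route as the paper: renorm by $\UU_n^{-1}$ so that the iteration becomes a perturbed Tseng step for the pair $(\UU_n\AAA,\UU_n\BB)$, derive the fundamental decrease estimate, invert the spectral hypothesis \eqref{e:tse} to obtain variable-metric quasi-Fej\'er monotonicity relative to $(\UU_n^{-1})_{n\in\NN}$, and then invoke the quasi-Fej\'er/Opial machinery of \cite{Guad2012} for \ref{Tsengii} and the four strong-convergence cases. The only (immaterial) difference is bookkeeping: you carry the errors through via $\pp_n'=\pp_n-\bb_n$, whereas the paper compares against fully error-free auxiliary iterates $\widetilde{\yy}_n,\widetilde{\pp}_n,\widetilde{\qq}_n$.
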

\begin{proof}
It follows from \cite[Lemma 3.7]{Varm12} that the
sequences $(\xx_n)_{n\in\NN}$, $(\yy_n)_{n\in\NN}$, 
$(\pp_n)_{n\in\NN}$ and $(\qq_n)_{n\in\NN}$ are well defined. 
Moreover, using \cite[Lemma~2.1(i)(ii)]{Guad2012} and \eqref{e:tse},
we obtain
\begin{equation}
\label{e:absol}
\big(\forall (\boldsymbol{z}_n)_{n\in\NN}\in\KKK^{\NN}\big)
\quad \sum_{n\in\NN}||||\boldsymbol{z}_n |||| < +\infty 
\quad \Leftrightarrow\quad 
\sum_{n\in\NN}||||\boldsymbol{z}_n||||_{\UU_{n}^{-1}} < +\infty
\end{equation}
and 
\begin{equation}
\label{e:absolun}
\big(\forall (\boldsymbol{z}_n)_{n\in\NN}\in\KKK^{\NN}\big)\quad
\sum_{n\in\NN}||||\boldsymbol{z}_n |||| < +\infty 
\quad \Leftrightarrow\quad  
\sum_{n\in\NN}||||\boldsymbol{z}_n||||_{\UU_{n}} < +\infty.
\end{equation}
Let us set 
\begin{equation}
\label{e:Tseng}
(\forall n\in\NN)\quad
 \begin{cases}
 \widetilde{\yy}_n= \xx_n-\gamma_n \UU_n\BB\xx_n\\
\widetilde{\pp}_n = J_{\gamma_n \UU_n \AAA} \widetilde{\yy}_n\\
\widetilde{\qq}_n = \widetilde{\pp}_n -\gamma_n \UU_n\BB\widetilde{\pp}_n\\
\widetilde{\xx}_{n+1} = \xx_n - \widetilde{\yy}_n +\widetilde{\qq}_n,
\end{cases}
\;\; \text{and} \quad
\begin{cases}
\uu_n = \gamma^{-1}_n\UU^{-1}_n(\xx_n-\widetilde{\pp}_n) + \BB\widetilde{\pp}_n-\BB\xx_n\\
\ee_n = \widetilde{\xx}_{n+1} - \xx_{n+1}\\
\dd_n =\qq_n-\widetilde{\qq}_n +  \widetilde{\yy}_n-\yy_{n}.
\end{cases}
\end{equation}
Then \eqref{e:Tseng} yields 
\begin{equation}
 (\forall n\in\NN)\quad \uu_n = \gamma^{-1}_n\UU^{-1}_n(\ \widetilde{\yy}_n-\widetilde{\pp}_n) 
+ \BB\widetilde{\pp}_n \in \AAA\widetilde{\pp}_n+\BB\widetilde{\pp}_n,
\end{equation}
and \eqref{e:Tseng}, \eqref{e:Tsenga}, Lemma \cite[Lemma 3.7(ii)]{Varm12},
and the Lipschitzianity of $\BB$ on $\KKK$ yield 
\begin{equation}
\label{e:abs1}
(\forall n\in\NN)\quad
\begin{cases}
||||\yy_n -\widetilde{\yy}_n||||_{\UU_{n}^{-1}} \leq  (\beta\mu)^{-1}||||\aaa_n ||||_{\UU_{n}}\\
 ||||\pp_n -\widetilde{\pp}_n ||||_{\UU_{n}^{-1}} \leq   
||||\bb_n ||||_{\UU_{n}^{-1}} + (\beta\mu)^{-1} ||||\aaa_n ||||_{\UU_{n}}\\
 ||||\qq_n -\widetilde{\qq}_n ||||_{\UU_{n}^{-1}} \leq 
2\Big( ||||\bb_n ||||_{\UU_{n}^{-1}} + (\beta\mu)^{-1} ||||\aaa_n ||||_{\UU_{n}}\Big)  
+ (\beta\mu)^{-1} ||||\cc_n ||||_{\UU_{n}}.
\end{cases}
\end{equation}
Since $(\aaa_n)_{n\in\NN}$, $(\bb_n)_{n\in\NN}$, and $(\cc_n)_{n\in\NN}$ 
are absolutely summable sequences in $\KKK$, we derive from
\eqref{e:absol}, \eqref{e:absolun}, \eqref{e:Tseng}, and \eqref{e:abs1} that
\begin{alignat}{2}
 \label{e:pabsl}
\begin{cases}
 \sum_{n\in\NN} ||||\pp_n -\widetilde{\pp}_n ||||< +\infty
\quad \text{and} \quad  \sum_{n\in\NN}  ||||\pp_n -\widetilde{\pp}_n ||||_{\UU_{n}^{-1}} < +\infty\\
 \sum_{n\in\NN} ||||\qq_n -\widetilde{\qq}_n ||||< +\infty
\quad \text{and} \quad  \sum_{n\in\NN}  ||||\qq_n -\widetilde{\qq}_n ||||_{\UU_{n}^{-1}} < +\infty\\
 \sum_{n\in\NN} ||||\dd_n ||||< +\infty
\quad \text{and} \quad  \sum_{n\in\NN}  ||||\dd_n ||||_{\UU_{n}^{-1}} < +\infty.
\end{cases}
\end{alignat}
Now, let $\xx\in \zer(\AAA+\BB)$. Then, for every $n\in\NN$,
 $(\xx,-\gamma_n \UU_n\BB\xx) \in\gra (\gamma_n\UU_n\AAA)$
and \eqref{e:Tseng} yields 
$(\widetilde{\pp}_n,\widetilde{\yy}_n-\widetilde{\pp}_n) \in \gra(\gamma_n \UU_n\AAA)$. 
Hence, by  monotonicity of $\UU_n\AAA$ with respect to the scalar product 
$\psscal{\cdot}{\cdot}_{\UU_{n}^{-1}}$, 
we have  
$
\psscal{\widetilde{\pp}_n-\xx}{\widetilde{\pp}_n-\widetilde{\yy}_n-\gamma_n\UU_n\BB\xx}_{\UU_{n}^{-1}} \leq 0.
$
Moreover, by  monotonicity of $\UU_n\BB$ with respect to 
the scalar product $\psscal{\cdot}{\cdot}_{\UU_{n}^{-1}}$, we also have 
$
\psscal{\widetilde{\pp}_n-\xx}{\gamma_n \UU_n \BB\xx -\gamma_n \UU_n \BB\widetilde{\pp}_n }_{\UU_{n}^{-1}}\leq 0.
$
By adding the last two inequalities, we obtain 
\begin{equation}
(\forall n\in\NN)\quad
\psscal{\widetilde{\pp}_n-\xx}{\widetilde{\pp}_n-\widetilde{\yy}_n-\gamma_n \UU_n\BB\widetilde{\pp}_n}_{\UU_{n}^{-1}} \leq 0.
\end{equation}
In turn, we derive from \eqref{e:Tseng} that 
\begin{alignat}{2}
\label{e:tse1}
(\forall n\in\NN)\quad
 &2\gamma_n \psscal{\widetilde{\pp}_n-\xx}{ \UU_n\BB\xx_n-\UU_n\BB\widetilde{\pp}_n}_{\UU_{n}^{-1}}\notag\\
&= 2\psscal{\widetilde{\pp}_n-\xx}{\widetilde{\pp}_n-\widetilde{\yy}_n -\gamma_n \UU_n \BB\widetilde{\pp}_n}_{\UU_{n}^{-1}}
\notag\\
&\quad +2 \psscal{\widetilde{\pp}_n-\xx}{\gamma_n \UU_n \BB\xx_n +\widetilde{\yy}_n-\widetilde{\pp}_n }_{\UU_{n}^{-1}}\notag\\ 
& \leq 2 \psscal{\widetilde{\pp}_n-\xx}{\gamma_n \UU_n \BB\xx_n 
+\widetilde{\yy}_n-\widetilde{\pp}_n }_{\UU_{n}^{-1}}\notag\\
&= 2 \psscal{\widetilde{\pp}_n-\xx}{\xx_n-\widetilde{\pp}_n }_{\UU_{n}^{-1}}\notag\\
& = ||||\xx_n-\xx||||_{\UU^{-1}_n}^2-||||\widetilde{\pp}_n-\xx||||_{\UU_{n}^{-1}}^2-||||\xx_n-\widetilde{\pp}_n|||_{\UU_{n}^{-1}}^2.
\end{alignat}
Hence, using \eqref{e:Tseng}, 
\eqref{e:tse1}, the $\beta$-Lipschitz continuity of $\BB$, \eqref{e:tse}, and 
\cite[Lemma~2.1(ii)]{Guad2012}, for every $n\in\NN$, we obtain 
\begin{alignat}{2}
\label{e:troicho}
||||\widetilde{\xx}_{n+1}-\xx||||_{\UU_{n}^{-1}}^2 &= 
||||\widetilde{\qq}_n + \xx_n -\widetilde{\yy}_n -\xx||||_{\UU_{n}^{-1}}^2\notag\\
& = ||||(\widetilde{\pp}_n-\xx) + \gamma_n \UU_n(\BB\xx_n-\BB\widetilde{\pp}_n)||||_{\UU_{n}^{-1}}^2\notag\\
&= ||||\widetilde{\pp}_n-\xx||||_{\UU_{n}^{-1}}^2 +2\gamma_n\psscal{\widetilde{\pp}_n-\xx}{\BB\xx_n-\BB\widetilde{\pp}_n}\notag\\
&\hspace{5.5cm} + \gamma_{n}^2 ||||\UU_n (\BB\xx_n-\BB\widetilde{\pp}_n)||||_{\UU_{n}^{-1}}^2\notag\\
&\leq ||||\xx_n-\xx||||_{\UU^{-1}_n}^2 -||||\xx_n-\widetilde{\pp}_n||||_{\UU_{n}^{-1}}^2 +\gamma_{n}^2\mu\beta^2
||||\xx_n-\widetilde{\pp}_n||||^2\notag\\  
&\leq ||||\xx_n-\xx||||_{\UU^{-1}_n}^2 - \mu^{-1}||||\xx_n-\widetilde{\pp}_n||||^2 
+\gamma_{n}^2\mu\beta^2||||\xx_n-\widetilde{\pp}_n||||^2.
\end{alignat}
Hence,     
it follows from \eqref{e:tse} and \cite[Lemma~2.1(i)]{Guad2012}
that
\begin{equation}
\label{e:t21ab}
(\forall n\in\NN)\quad
 ||||\widetilde{\xx}_{n+1}-\xx||||_{\UU_{n+1}^{-1}}^2 \leq 
(1+\eta_n)||||\xx_n-\xx||||_{\UU^{-1}_n}^2 
-\mu^{-1}(1-\gamma^{2}_n \beta^2\mu^2)||||\xx_n-\widetilde{\pp}_n||||^2.
\end{equation}
Consequently, 
\begin{equation}
\label{e:t21a}
(\forall n\in\NN)\quad
 ||||\widetilde{\xx}_{n+1}-\xx||||_{\UU_{n+1}^{-1}} \leq (1+\eta_n)||||\xx_n-\xx||||_{\UU^{-1}_n}.
\end{equation}
For every $n\in\NN$, set 
\begin{equation} 
\varepsilon_n = \sqrt{\mu\alpha^{-1}}\Big(
2\big( ||||\bb_n ||||_{\UU_{n}^{-1}} + (\beta\mu)^{-1} ||||\aaa_n ||||_{\UU_{n}}\big)  
+ (\beta\mu)^{-1} ||||\cc_n ||||_{\UU_{n}} + (\beta\mu)^{-1}||||\aaa_n ||||_{\UU_{n}}\Big) .
\end{equation}
Then $(\varepsilon_n)_{n\in\NN}$ is summable by \eqref{e:absol} and \eqref{e:absolun}. We derive 
from \cite[Lemma~2.1(ii)(iii)]{Guad2012}, and \eqref{e:pabsl} that 
\begin{alignat}{2}
(\forall n\in\NN)\quad ||||\ee_n||||_{\UU_{n+1}^{-1}}& 
= ||||\widetilde{\xx}_{n+1}-\xx_{n+1}||||_{\UU_{n+1}^{-1}}\notag\\
&\leq \sqrt{\alpha^{-1}}||||\widetilde{\xx}_{n+1}-\xx_{n+1}||||\notag\\
&\leq \sqrt{\mu\alpha^{-1}}||||\widetilde{\xx}_{n+1}-\xx_{n+1}||||_{\UU_{n}^{-1}}\notag\\
&\leq  \sqrt{\mu\alpha^{-1}}
(||||\widetilde{\yy}_n-\yy_{n}||||_{\UU_{n}^{-1}} + ||||\widetilde{\qq}_{n}-\qq_{n}||||_{\UU_{n}^{-1}})\notag\\
&\leq \varepsilon_n.
\end{alignat}
In turn, we derive from \eqref{e:t21a} that  
\begin{alignat}{2}
\label{e:oac}
(\forall n\in\NN)\quad
||||\xx_{n+1}-\xx||||_{\UU_{n+1}^{-1}} &\leq 
||||\widetilde{\xx}_{n+1}-\xx||||_{\UU_{n+1}^{-1}}+  
||||\widetilde{\xx}_{n+1}-\xx_{n+1}||||_{\UU_{n+1}^{-1}}\notag\\
&\leq||||\widetilde{\xx}_{n+1}-\xx||||_{\UU_{n+1}^{-1}}+ \varepsilon_n\notag\\
&\leq (1+\eta_n)||||\xx_n-\xx||||_{\UU^{-1}_n}+\varepsilon_n.
\end{alignat}
This shows that $(\xx_n)_{n\in\NN}$ is $|\cdot|$--quasi-Fej\'er monotone with respect to the target set
$\zer(\AAA+\BB)$ relative to $(\UU^{-1}_n)_{n\in\NN}$. Moreover, by \cite[Proposition 3.2]{Guad2012}, 
 $(||||\xx_n-\xx||||_{\UU^{-1}_n})_{n\in\NN}$ is bounded. 
In turn, since $\BB$ and $(J_{\gamma_n \UU_n\AAA})_{n\in\NN}$
are Lipschitzian, and 
$(\forall n\in\NN)\; \xx = J_{\gamma_n\UU_n\AAA}(\xx -\gamma_n\UU_n\BB\xx)$,
we deduce from \eqref{e:Tseng} that 
$(\widetilde{\yy}_n)_{n\in\NN},(\widetilde{\pp}_{n})_{n\in\NN}$, 
and $(\widetilde{\qq}_{n})_{n\in\NN}$ are bounded. 
Therefore, 
\begin{equation}
 \tau = \sup_{n\in\NN}\{||||\xx_n -\widetilde{\yy}_n + \widetilde{\qq}_n -\xx ||||_{\UU_{n}^{-1}},
||||\xx_n-\xx||||_{\UU^{-1}_n}, 1+\eta_n\}
 < +\infty.
\end{equation}
Hence, using \eqref{e:Tseng}, Cauchy-Schwarz for the norms
$(||||\cdot||||_{\UU_{n}^{-1}})_{n\in\NN}$, and \eqref{e:troicho}, we get
\begin{alignat}{2}
(\forall n\in\NN)\;\;
||||\xx_{n+1}-\xx ||||_{\UU_{n}^{-1}}^2 
&= ||||\xx_n -\yy_n +\qq_n -\xx ||||_{\UU_{n}^{-1}}^2\notag\\
&= ||||\widetilde{\qq}_n+ \xx_n -\widetilde{\yy}_n -\xx + \dd_n ||||_{\UU_{n}^{-1}}^2\notag\\
&\leq ||||\widetilde{\qq}_n+ \xx_n -\widetilde{\yy}_n -\xx||||_{\UU^{-1}_n}^2 
+ 2\tau ||||\dd_n||||_{\UU_{n}^{-1}} + ||||\dd_n||||_{\UU_{n}^{-1}}^2\notag\\
&\leq ||||\xx_n-\xx||||_{\UU^{-1}_n}^2 -
\mu^{-1}(1-\gamma^{2}_n \beta^2\mu^2)||||\xx_n-\widetilde{\pp}_n||||^2 + \varepsilon_{1,n},
\end{alignat}
where $(\forall n\in\NN)\; \varepsilon_{1,n} = 
 2\tau ||||\dd_n||||_{\UU_{n}^{-1}} + ||||\dd_n||||_{\UU_{n}^{-1}}^2$.
In turn, for every $n\in\NN$, by \eqref{e:tse} and \cite[Lemma 2.1(i)]{Guad2012},
\begin{alignat}{2}
||||\xx_{n+1}-\xx ||||_{\UU_{n+1}^{-1}}^2 &\leq (1+\eta_n)||||\xx_{n+1}-\xx ||||_{\UU_{n}^{-1}}^2 \notag\\
&\leq ||||\xx_n-\xx||||_{\UU^{-1}_n}^2 -
\mu^{-1}(1-\gamma^{2}_n \beta^2\mu^2)||||\xx_n-\widetilde{\pp}_n||||^2 
+ \tau\varepsilon_{1,n} + \tau^2\eta_n.
\end{alignat}
Since $(\tau\varepsilon_{1,n} + \tau^2\eta_n)_{n\in\NN} \in \ell_{+}^1(\NN)$ by \eqref{e:pabsl},
 it follows from \cite[Lemma 3.1]{plc7} that 
\begin{equation}
\label{e:emyeu}
 \sum_{n\in\NN} ||||\xx_n-\widetilde{\pp}_n||||^2 < +\infty.
\end{equation}

\ref{Tsengi}: 
It follows from \eqref{e:emyeu} and \eqref{e:pabsl} that
\begin{equation}
 \sum_{n\in\NN} ||||\xx_n-\pp_n||||^2  
\leq 2\sum_{n\in\NN} ||||\xx_n-\widetilde{\pp}_n||||^2 
+ 2\sum_{n\in\NN}|||||\pp_n-\widetilde{\pp}_n ||||^2  < +\infty.
\end{equation}
Furthermore, we derive from \eqref{e:pabsl} and \eqref{e:Tseng} that 
\begin{alignat}{2}
 \sum_{n\in\NN} ||||\yy_n -\qq_n ||||^2 &= \sum_{n\in\NN}
  ||||\widetilde{\qq}_n-\widetilde{\yy}_n  +\dd_n ||||^2\notag\\
&= \sum_{n\in\NN}  ||||\widetilde{\pp}_n -\xx_n 
+\gamma_n\UU_n(\BB\xx_n -\BB\widetilde{\pp}_n) +\dd_n ||||^2\notag \\
&\leq 3\Big( \sum_{n\in\NN} ||||\xx_n -\widetilde{\pp}_n||||^2 
+ ||||\gamma_n\UU_n(\BB\xx_n -\BB\widetilde{\pp}_n) ||||^2
 + ||||\dd_n ||||^2\Big)\notag \\
&< +\infty. 
\end{alignat}

\ref{Tsengii}: Let $\xx$ be a weak cluster point of $(\xx_n)_{n\in\NN}$. 
Then there exists a subsequence $(\xx_{k_n})_{n\in\NN}$ that converges weakly to
$\xx$. Therefore $\widetilde{\pp}_{k_n}\weakly \xx$ by \eqref{e:emyeu}. 
Furthermore, it follows from \eqref{e:Tseng} that $\uu_{k_n}\to 0$. 
Hence, since $(\forall n\in\NN )\;(\widetilde{\pp}_{k_n},\uu_{k_n})\in\gra(\AAA+\BB)$, 
we obtain, $\xx\in \zer(\AAA+\BB)$ \cite[Proposition 20.33(ii)]{livre1}. 
Altogether, it follows \cite[Lemma 2.3(ii)]{Guad2012} and \cite[Theorem 3.3]{Guad2012} 
that $\xx_n\weakly \overline{\xx}$ and hence that
$\pp_n\weakly \overline{\xx}$ by \ref{Tsengi}.

\ref{Tsengiiia}: Since $\AAA$ and $\BB$ are maximally monotone and $\dom \BB = \KKK$,
$\AAA+\BB$ is maximally monotone \cite[Corollary 24.4(i)]{livre1},  $\zer(\AAA+\BB)$ is therefore 
closed \cite[Proposition~23.39]{livre1}.
Hence, the claims follow
from \ref{Tsengi}, \eqref{e:oac}, and \cite[Proposition 3.4]{Guad2012}.

\ref{Tsengiiib}: By \ref{Tsengi}, $\xx_n\weakly\overline{\xx}$, and hence \eqref{e:emyeu}
implies that $\widetilde{\pp}_n \weakly\overline{\xx}$. 
Furthermore, it follows from \eqref{e:Tseng} that $\uu_{n}\to 0$. 
Hence, since $(\forall n\in\NN)\;
(\widetilde{\pp}_{n},\uu_{n})\in\gra(\AAA+\BB)$ 
and since $\AAA+\BB$ is demiregular at $\overline{\xx}$, 
by \cite[Definition 2.3]{plc2010}, $\widetilde{\pp}_n\to\overline{\xx}$, 
 and therefore \eqref{e:emyeu}
implies that $\xx_{n}\to\overline{\xx}$.

\ref{Tsengiiic}: 
If $\AAA$ or $\BB$ is uniformly monotone at 
$\overline{\xx}$, then $\AAA +\BB$ is 
uniformly monotone at $\overline{x}$. 
Therefore, the result follows from \cite[Proposition 2.4(i)]{plc2010}.

\ref{Tsengiiid}: Suppose that $\zz\in\inte\zer(\AAA+\BB)$ and fix $\rho\in\RPP$ 
such that $B(\zz;\rho)\subset\zer(\AAA+\BB) $. It follows from 
\eqref{e:oac} and \cite[Proposition 3.2]{Guad2012} that
\begin{equation}
 \varepsilon=\sup_{\xx\in B(\zz;\rho)}\sup_{n\in\NN}
||||\xx_n-\xx||||_{\UU^{-1}_n}\leq(1/\sqrt{\alpha})\big(\sup_{n\in\NN}
||||\xx_n-\zz||||+\sup_{\xx\in B(\zz;\rho)}||||\xx-\zz||||\big)<\pinf
\end{equation}
 and 
from \eqref{e:oac} that 
\begin{align}
\label{e:oaco}
(\forall n\in\NN)(\forall \xx\in B(\zz;\rho))\;
||||\xx_{n+1}-\xx||||^2_{\UU_{n+1}^{-1}}
&\leq||||\xx_n-\xx||||_{\UU^{-1}_n}^2
+2\varepsilon(\varepsilon\eta_n+\varepsilon_n)+(\varepsilon\eta_n+ \varepsilon_n)^2.
\end{align}
Hence, the claim follows from \ref{Tsengi}, \cite[Lemma 2.1]{Guad2012},
and \cite[Proposition 4.3]{Guad2012}.
\end{proof}

\begin{remark} Here are some remarks.
\begin{enumerate}
\item In the case when $(\forall n\in\NN)\; \UU_n= \IId $, the standard 
forward-backward-forward splitting 
algorithm \eqref{e:Tsenga} reduces to algorithm proposed in \cite[Eq. (2.3)]{siop2}, 
which was proposed initially in the error-free setting in \cite{Tseng00}.
\item An alternative variable metric splitting algorithm proposed in \cite{solodov09} 
can be used to find a zero of the sum of a maximally monotone operator $\AAA$
and a Lipschitzian monotone operator $\BB$ in instance 
when $\KKK$ is finite-dimensional. This algorithm uses  a different 
error model and involves more iteration-dependent variables than \eqref{e:Tsenga}.
 \end{enumerate}
\end{remark}

\begin{example}
Let $\boldsymbol{f}\colon\KKK\to\left[-\infty,+\infty\right]$ 
be a proper lower semicontinuous convex function, let $\alpha\in\RPP$, let $\beta\in\RPP$, 
let $\BB\colon\KKK\to\KKK$ be a monotone and $\beta$-Lipschitzian operator, let 
$(\eta_n)_{n\in\NN}\in\ell_+^1(\NN)$, and let $(\UU_n)_{n\in\NN}$ be
a sequence in $\BP_{\alpha}(\KKK)$ that satisfies 
\eqref{e:tse}. Furthermore, let $\xx_0\in\KKK$,
let $\varepsilon\in\left]0,\min\{1,1/(\mu\beta+1)\}\right[$, where 
$\mu$ is defined as in \eqref{e:tse},
let $(\gamma_n)_{n\in\NN}$ be a 
sequence in $[\varepsilon,(1-\varepsilon)/(\beta\mu)]$.
Suppose that the 
variational inequality
\begin{equation}
\label{e:2012-11:10}
\text{find}\quad \xxx\in\KKK\quad\text{such that}\quad
(\forall \yy\in\KKK)\quad\scal{\xxx-\yy}{\BB\xxx}+ \boldsymbol{f}(\xxx)\leq \boldsymbol{f}(\yy)
\end{equation}
admits at least one solution and set
\begin{equation}
\label{e:Tsengavar}
(\forall n\in\NN)\quad
\begin{array}{l}
\left\lfloor
\begin{array}{l}
\yy_n= \xx_n-\gamma_n\UU_n\BB \xx_n\\[1mm]
\pp_n = \arg\underset{\xx\in\KKK}{\min}\big(\boldsymbol{f}(\xx) 
+ \frac{1}{2\gamma_n}||||\xx-\yy_n||||_{\UU_{n}^{-1}}^2\big)\\
\qq_n = \pp_n -\gamma_n \UU_n\BB\pp_n\\
\xx_{n+1} = \xx_n -\yy_n +\qq_n.
\end{array}
\right.\\[2mm]
\end{array}
\end{equation}
Then $(\xx_n)_{n\in\NN}$ converges weakly to a solution 
$\xxx$ to \eqref{e:2012-11:10}.
\end{example}
\begin{proof}
Set $\AAA=\partial \boldsymbol{f}$ and 
$(\forall n\in\NN)\; \aaa_n =0, \bb_n = 0, \cc_n=0$
in Theorem~\ref{l:Tsenglemma}\ref{Tsengii}.
\end{proof}

\section{Monotone inclusions involving Lipschitzian operators}
\label{s:vmpdfbf}
The applications of the forward-backward-forward splitting algorithm 
considered in \cite{siop2,plc6,Tseng00} can be extended to a variable metric 
setting using Theorem \ref{l:Tsenglemma}. As an illustration, we present a variable metric version 
of the algorithm proposed in \cite[Eq. (3.1)]{plc6}.
Recall that
the parallel sum of $A\colon\HH\to 2^{\HH}$ and $B\colon\HH\to 2^{\HH}$ is \cite{livre1} 
\begin{equation}
\label{e:parasum}
A\infconv B=(A^{-1}+ B^{-1})^{-1}.
\end{equation} 

\begin{problem}
\label{CCP}
Let $\HH$ be a real Hilbert space,
let $m$ be a strictly positive integer, let $z\in\HH$,
let $A\colon\HH\to 2^{\HH}$ be maximally monotone operator, 
let $C\colon\HH\to\HH$ be monotone and $\nu_0$-Lipschitzian for some
$\nu_0\in\left]0,+\infty\right[$.
For every $i\in\{1,\ldots, m\}$, let $\GG_i$ be a real Hilbert space, 
let $r_i \in \GG_i$, let $B_i\colon \GG_i\to2^{\GG_i}$ 
be maximally monotone operator, 
let $D_i\colon \GG_i\to2^{\GG_i}$ be monotone and
such that $D_{i}^{-1}$ is $\nu_i$-Lipschitzian for some
$\nu_i\in\left]0,+\infty\right[$,
and let $L_i\colon\HH \to\GG_i$ 
is a nonzero bounded linear operator. 
Suppose that
\begin{equation}
\label{e:fbfcond}
z\in\ran
\bigg(A+\sum_{i=1}^mL^{*}_i\big((B_i\;\vuo\; D_i)(L_i\cdot-r_i)\big) + C\bigg).
\end{equation}
The problem is to solve  the primal inclusion
\begin{equation}\label{primal12:41}
\text{find $\overline{x}\in\HH$ such that}\; 
z\in A\overline{x} +\sum_{i=1}^mL^{*}_i
\big((B_i\;\vuo\; D_i)(L_i\overline{x}-r_i)\big) + C\overline{x} ,
\end{equation}
and the dual inclusion 
\begin{equation}\label{dual12:41}
\text{find $\overline{v}_1 \in \GG_1,\ldots, \overline{v}_m \in \GG_m$ such that 
 $(\exists x\in\HH)$ }
\begin{cases}
z -\sum_{i = 1}^m L_{i}^*\overline{v}_i  \in Ax +Cx,\\
(\forall i\in\{1,\ldots,m\})\;\overline{v}_i\in (B_i\;\vuo\; D_i)(L_ix-r_i).
\end{cases}
\end{equation}
\end{problem}
As shown in \cite{plc6}, Problem \ref{CCP} covers 
a wide class of problems in nonlinear analysis and
convex  optimization problems.
However, the algorithm in \cite[Theorem 3.1]{plc6} is studied in the context of a fixed 
metric. The following result extends this result to a variable metric setting.

\begin{corollary}
\label{t:Sv2}
Let $\alpha$ be in $\left]0,\pinf\right[$, 
let $(\eta_{0,n})_{n\in\NN}$ be a sequence in $\ell_{+}^1(\NN)$,
let $(U_n)_{n\in\NN}$ be a sequence in $\BP_{\alpha}(\HH)$,
and for every $i\in\{1,\ldots,m\}$, 
let $(\eta_{i,n})_{n\in\NN}$ be a sequence in $\ell_{+}^1(\NN)$,
let $(U_{i,n})_{n\in\NN}$ be a sequence in $\BP_{\alpha}(\GG_i)$ such that 
$\mu=\sup_{n\in\NN}\{ \|U_n\|, \|U_{1,n}\|,\ldots, \|U_{m,n}\|\} < +\infty$ and 
\begin{alignat}{2}
(\forall n\in\NN)\quad 
(1+\eta_{0,n})U_{n+1} \succcurlyeq U_n,\quad &\text{and} 
\quad (\forall i\in \{1,\ldots,m\})\quad
(1+\eta_{i,n})U_{i,n+1} \succcurlyeq U_{i,n}.
\end{alignat}
Let $(a_{1,n})_{n\in\NN}, (b_{1,n})_{n\in\NN}$, and $(c_{1,n})_{n\in\NN}$
be absolutely summable sequences in $\HH$, and for every $i\in\{1,\ldots,m\}$, 
let $(a_{2,i,n})_{n\in\NN}, (b_{2,i,n})_{n\in\NN}$, and $(c_{2,i,n})_{n\in\NN}$ 
be absolutely summable sequences 
in $\GG_i$. Furthermore, set 
\begin{equation}
 \beta = \max\{\nu_0,\nu_1,\ldots, \nu_m\} + \sqrt{\sum_{i=1}^m\|L_i\|^2},
\end{equation}
let $x_0\in\HH$, let $(v_{1,0}, \ldots, v_{m,0}) \in \GG_1\oplus\ldots\oplus\GG_m$, 
let $\varepsilon \in \left]0, 1/(1+\beta\mu)\right[$, let $(\gamma_n)_{n\in\NN}$ be a sequence in 
$[\varepsilon, (1-\varepsilon)/(\beta\mu)]$. Set 
\begin{equation}
\label{e:sva2}
(\forall n\in\NN)\quad 
\begin{array}{l}
\left\lfloor
\begin{array}{l}
y_{1,n} = x_n - \gamma_n U_n\big(Cx_n+ \sum_{i=1}^{m}L_{i}^*v_{i,n}+ a_{1,n}\big)\\
p_{1,n}=J_{\gamma_n U_{n}A}(y_{1,n} + \gamma_nU_nz) + b_{1,n}\\
\operatorname{for}\  i =1,\ldots, m\\
\left\lfloor
\begin{array}{l}
y_{2,i,n} = v_{i,n} + \gamma_nU_{i,n}\big(L_ix_n - D_{i}^{-1}v_{i,n} + a_{2,i,n}\big)\\
p_{2,i,n} = J_{\gamma_n U_{i,n}B_{i}^{-1}}(y_{2,i,n} -\gamma_nU_{i,n}r_i) + b_{2,i,n}\\
q_{2,i,n} = p_{2,i,n} + \gamma_nU_{i,n}\big(L_ip_{1,n} -D_{i}^{-1}p_{2,i,n} + c_{2,i,n}\big)\\
v_{i,n+1} = v_{i,n} - y_{2,i,n} + q_{2,i,n} 
\end{array}
\right.\\[2mm]
q_{1,n} = p_{1,n} -\gamma_nU_n\big(Cp_{1,n} + \sum_{i=1}^m L^{*}_ip_{2,i,n} + c_{1,n}\big)\\
x_{n+1} = x_n - y_{1,n} + q_{1,n}.
\end{array}
\right.\\[2mm]
\end{array}
\end{equation}
Then the following hold. 
\begin{enumerate}
\item \label{t:Sv2i} $\sum_{n\in\NN}\|x_n - p_{1,n}\|^2 < +\infty$ and 
$(\forall i\in \{1,\ldots,m\})\; \sum_{n\in\NN}\|v_{i,n} - p_{2,i,n}\|^2 < +\infty$. 
 \item\label{t:Sv2ii}  
There exist a solution $\overline{x}$ to \eqref{primal12:41} and a solution  
$(\overline{v}_1,\ldots, \overline{v}_m)$ to \eqref{dual12:41} such that the following hold.
\begin{enumerate}
 \item \label{t:Sv2iia}
$x_n\weakly \overline{x}$ and $p_{1,n} \weakly \overline{x}$.
\item \label{t:Sv2iib}  $(\forall i\in \{1,\ldots,m\})\; v_{i,n} \weakly \overline{v}_i$ and 
$p_{2,i,n} \weakly \overline{v}_i$.
\item \label{t:Sv2iic} Suppose that $A$ or $C$ is uniformly monotone at $\overline{x}$, then 
$x_n\to\overline{x}$ and $p_{1,n}\to\overline{x}$.
\item \label{t:Sv2iid} 
Suppose that $B^{-1}_j$ or $D_{j}^{-1}$ is uniformly monotone at $\overline{v}_j$, 
for some $j\in\{1,\ldots,m\}$, then 
$v_{j,n}\to\overline{v}_j$ and $p_{2,j,n}\to\overline{v}_j$.
\end{enumerate}
\end{enumerate}
\end{corollary}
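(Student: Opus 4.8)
The plan is to lift Problem~\ref{CCP} into a product space and read off the conclusions from Theorem~\ref{l:Tsenglemma}. Work in $\KKK=\HH\oplus\GG_1\oplus\cdots\oplus\GG_m$ with $\scal{(x,v_1,\ldots,v_m)}{(x',v_1',\ldots,v_m')}=\scal{x}{x'}+\sum_{i=1}^m\scal{v_i}{v_i'}$, and define
\[
\AAA\colon(x,v_1,\ldots,v_m)\mapsto(-z+Ax)\times(r_1+B_1^{-1}v_1)\times\cdots\times(r_m+B_m^{-1}v_m),
\]
\[
\BB\colon(x,v_1,\ldots,v_m)\mapsto\Big(Cx+\textstyle\sum_{i=1}^mL_i^*v_i,\,-L_1x+D_1^{-1}v_1,\ldots,-L_mx+D_m^{-1}v_m\Big).
\]
A direct computation shows that $0\in(\AAA+\BB)(\overline{x},\overline{v}_1,\ldots,\overline{v}_m)$ is equivalent to the primal--dual system \eqref{primal12:41}--\eqref{dual12:41} (the $i$-th block reads $L_i\overline{x}-r_i\in B_i^{-1}\overline{v}_i+D_i^{-1}\overline{v}_i$, i.e. $\overline{v}_i\in(B_i\vuo D_i)(L_i\overline{x}-r_i)$, and the first block is the inclusion in \eqref{dual12:41}); in particular \eqref{e:fbfcond} furnishes such a zero, so $\zer(\AAA+\BB)\neq\emp$. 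Setting $\UU_n\colon(x,v_1,\ldots,v_m)\mapsto(U_nx,U_{1,n}v_1,\ldots,U_{m,n}v_m)$, $\aaa_n=(a_{1,n},-a_{2,1,n},\ldots,-a_{2,m,n})$, $\bb_n=(b_{1,n},b_{2,1,n},\ldots,b_{2,m,n})$, $\cc_n=(c_{1,n},-c_{2,1,n},\ldots,-c_{2,m,n})$, and $\xx_n=(x_n,v_{1,n},\ldots,v_{m,n})$, the recursion \eqref{e:sva2} becomes exactly \eqref{e:Tsenga}; here the only thing to check is that $J_{\gamma_n\UU_n\AAA}$ splits over the blocks and that the constants are absorbed by $J_{\gamma_nU_n(A\cdot-z)}(y)=J_{\gamma_nU_nA}(y+\gamma_nU_nz)$ and $J_{\gamma_nU_{i,n}(B_i^{-1}\cdot+r_i)}(y)=J_{\gamma_nU_{i,n}B_i^{-1}}(y-\gamma_nU_{i,n}r_i)$.

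Next I would verify the hypotheses of Theorem~\ref{l:Tsenglemma}. $\AAA$ is maximally monotone as a Cartesian product of the maximally monotone operators $-z+A$ and $r_i+B_i^{-1}$. Splitting $\BB$ into the skew-adjoint bounded operator $(x,v_1,\ldots,v_m)\mapsto(\sum_iL_i^*v_i,-L_1x,\ldots,-L_mx)$ and the block-diagonal monotone operator $(x,v_1,\ldots,v_m)\mapsto(Cx,D_1^{-1}v_1,\ldots,D_m^{-1}v_m)$ shows $\BB$ is monotone, single-valued with $\dom\BB=\KKK$, and $\beta$-Lipschitzian with $\beta=\max\{\nu_0,\ldots,\nu_m\}+\sqrt{\sum_i\|L_i\|^2}$, since the skew part has norm at most $\sqrt{\sum_i\|L_i\|^2}$ and the diagonal part is $\max_i\nu_i$-Lipschitzian. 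The block-diagonal $\UU_n$ lies in $\BP_\alpha(\KKK)$, satisfies $\sup_n\|\UU_n\|=\mu$, and, with $\eta_n=\max\{\eta_{0,n},\ldots,\eta_{m,n}\}\in\ell_{+}^1(\NN)$, obeys $(1+\eta_n)\UU_{n+1}\succcurlyeq\UU_n$; the step-size window and the absolute summability of $\aaa_n,\bb_n,\cc_n$ transfer verbatim. Theorem~\ref{l:Tsenglemma}\ref{Tsengi} then gives \ref{t:Sv2i}, because $\|\xx_n-\pp_n\|^2=\|x_n-p_{1,n}\|^2+\sum_i\|v_{i,n}-p_{2,i,n}\|^2$, and Theorem~\ref{l:Tsenglemma}\ref{Tsengii} produces a zero $\overline{\xx}=(\overline{x},\overline{v}_1,\ldots,\overline{v}_m)$ of $\AAA+\BB$ with $\xx_n\weakly\overline{\xx}$ and $\pp_n\weakly\overline{\xx}$; reading off components and using the equivalence above yields \ref{t:Sv2iia}--\ref{t:Sv2iib}.

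The hard part is \ref{t:Sv2iic}--\ref{t:Sv2iid}, where only a single scalar block is uniformly monotone. Here one cannot invoke Theorem~\ref{l:Tsenglemma}\ref{Tsengiiic} as a black box: uniform monotonicity of, say, $C$ at $\overline{x}$ does not make $\AAA+\BB$ uniformly monotone on all of $\KKK$ (the remaining coordinates are uncontrolled), so that route would assert full strong convergence, which is more than claimed. Instead I would re-enter the proof of Theorem~\ref{l:Tsenglemma} and reuse the auxiliary sequence $\widetilde{\pp}_n$ defined there, together with the facts established in it: $\widetilde{\pp}_n\weakly\overline{\xx}$, $\xx_n-\widetilde{\pp}_n\to0$ by \eqref{e:emyeu}, $\pp_n-\widetilde{\pp}_n\to0$, and $\uu_n\in(\AAA+\BB)\widetilde{\pp}_n$ with $\uu_n\to0$. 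Since $\uu_n-\BB\widetilde{\pp}_n\in\AAA\widetilde{\pp}_n$ and $-\BB\overline{\xx}\in\AAA\overline{\xx}$, I would use the split
\[
\scal{\widetilde{\pp}_n-\overline{\xx}}{\uu_n}=\scal{\widetilde{\pp}_n-\overline{\xx}}{(\uu_n-\BB\widetilde{\pp}_n)+\BB\overline{\xx}}+\scal{\widetilde{\pp}_n-\overline{\xx}}{\BB\widetilde{\pp}_n-\BB\overline{\xx}},
\]
in which the first term is $\geq0$ by monotonicity of $\AAA$ and the second is $\geq0$ by monotonicity of $\BB$ (the skew part of $\BB$ contributing nothing to its quadratic form). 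Both inner products separate over the blocks, so if $C$ or $A$ is uniformly monotone at $\overline{x}$ the corresponding term is $\geq\phi(\|\widetilde{p}_{1,n}-\overline{x}\|)$, while if $B_j^{-1}$ or $D_j^{-1}$ is uniformly monotone at $\overline{v}_j$ it is $\geq\phi(\|\widetilde{p}_{2,j,n}-\overline{v}_j\|)$, for the appropriate modulus $\phi$. As the left-hand side tends to $0$ and $\phi$ is increasing and vanishes only at $0$, the relevant block of $\widetilde{\pp}_n$ converges strongly to the corresponding block of $\overline{\xx}$; combined with $\xx_n-\widetilde{\pp}_n\to0$ and $\pp_n-\widetilde{\pp}_n\to0$ this gives the strong convergence of $x_n,p_{1,n}$ (resp. $v_{j,n},p_{2,j,n}$) asserted in \ref{t:Sv2iic}--\ref{t:Sv2iid}.
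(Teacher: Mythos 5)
Your proposal is correct and, for the reformulation and for parts \ref{t:Sv2i}, \ref{t:Sv2iia}, \ref{t:Sv2iib}, it is exactly the paper's argument: the same product-space operators $\AAA$, $\BB$, $\UU_n$, the same splitting of $J_{\gamma_n\UU_n\AAA}$ over the blocks, and a direct appeal to Theorem~\ref{l:Tsenglemma}\ref{Tsengi}--\ref{Tsengii}. (Your signs $\aaa_n=(a_{1,n},-a_{2,1,n},\ldots)$, $\cc_n=(c_{1,n},-c_{2,1,n},\ldots)$ are in fact the ones that make \eqref{e:sva2} match \eqref{e:Tsenga}; this is immaterial since absolute summability is sign-invariant.) For \ref{t:Sv2iic}--\ref{t:Sv2iid} you correctly note that Theorem~\ref{l:Tsenglemma}\ref{Tsengiiic} cannot be used as a black box, and your fix rests on the same idea as the paper's --- separate the monotonicity inequality over the coordinates of the error-free iterates and exploit uniform monotonicity in the one relevant block --- but is packaged more abstractly. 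The paper works in coordinates: it writes the resolvent inclusions \eqref{e:dualsola} for $\widetilde{p}_{1,n},\widetilde{p}_{2,i,n}$, pairs them with \eqref{e:dualsol}, sums over blocks, and must then expand and estimate the cross terms $\chi_n$ and $\beta_{i,n}$ (which arise because the forward steps are evaluated at $x_n,v_{i,n}$ rather than at $\widetilde{p}_{1,n},\widetilde{p}_{2,i,n}$) via Cauchy--Schwarz and the Lipschitz constants. You instead pair $\uu_n\in(\AAA+\BB)\widetilde{\pp}_n$ with $0\in(\AAA+\BB)\overline{\xx}$ and use $\uu_n\to0$ together with boundedness of $(\widetilde{\pp}_n)_{n\in\NN}$, so that all of those cross terms are absorbed into the single quantity $\psscal{\widetilde{\pp}_n-\overline{\xx}}{\uu_n}\to0$, after which the product structure of $\AAA$ and of the monotone diagonal part of $\BB$ (the skew part contributing nothing) isolates the uniformly monotone coordinate. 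Both routes are sound; yours is shorter and reuses more of the machinery already built inside the proof of Theorem~\ref{l:Tsenglemma}, while the paper's is self-contained at the level of the original variables.
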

\begin{proof} All sequences generated by algorithm \eqref{e:sva2} are well defined 
by \cite[Lemma 3.7]{Varm12}.
We define $\KKK=\HH\oplus\GG_1\oplus\cdots\oplus\GG_m$ the Hilbert direct 
sum of the Hilbert spaces $\HH$ and $(\GG_i)_{1\leq i\leq m}$,
the scalar product and the associated 
norm of $\KKK$ respectively defined by
\begin{equation}
\label{e:palawan-mai2008b-}
\psscal{\cdot}{\cdot}
\colon((x,\boldsymbol{v}),(y,\boldsymbol{w}))\mapsto
\scal{x}{y}+ \sum_{i=1}^m\scal{v_i}{w_i}
\quad\text{and}\quad||||\cdot||||\colon
(x,\boldsymbol{v})\mapsto\sqrt{\|x\|^2+\sum_{i=1}^m\|v_i\|^2},
\end{equation}
where $\vv = (v_1,\ldots,v_m)$ and $\ww = (w_1,\ldots,w_m)$ are generic elements in 
$\GG_1\oplus\cdots\oplus\GG_m$.
 Set
\begin{equation}
 \label{e:maximal1}
\begin{cases}
\AAA\colon\KKK\to 2^{\KKK}\colon
(x,v_1,\ldots,v_m)\mapsto (-z+ Ax)
\times(r_1 + B_{1}^{-1}v_1)\times\ldots\times(r_m + B^{-1}_{m}v_m)\\
\BB\colon\KKK\to \KKK\colon
(x,v_1,\ldots,v_m)\mapsto
\bigg(Cx+\sum_{i=1}^mL_{i}^*v_i,D_{1}^{-1}v_1-L_1x,\ldots,D_{m}^{-1}v_m-L_mx\bigg)\\
(\forall n\in\NN)\quad 
\UU_n\colon\KKK\to \KKK\colon
(x,v_1,\ldots,v_m)\mapsto \big(U_nx, U_{1,n}v_1,\ldots U_{m,n}v_m\big).\\
\end{cases}
\end{equation}
Since $\AAA$ is maximally monotone \cite[Propositions 20.22 and 20.23]{livre1},
$\BB$ is monotone and $\beta$-Lipschitzian \cite[Eq. (3.10)]{plc6} 
with $\dom\BB=\KKK$, $\AAA+\BB$ is maximally monotone~\cite[Corollary~24.24(i)]{livre1}. 
Now set 
$(\forall n\in\NN)\; \eta_n= \max\{\eta_{0,n},\eta_{1,n},\ldots, \eta_{m,n}\}$. 
Then $(\eta_n)_{n\in\NN}\in\ell_{+}^1(\NN)$. Moreover, we derive from our assumptions 
on the sequences $(U_n)_{n\in\NN}$ and $(U_{1,n})_{n\in\NN}, \ldots,(U_{m,n})_{n\in\NN}$ that
\begin{equation}
\label{e:tsa1}
 \mu =\sup_{n\in\NN} \|\UU_n\| < +\infty
\quad \text{and}\quad (1+\eta_n)\UU_{n+1} \succcurlyeq \UU_n \in \BP_{\alpha}(\KKK).
\end{equation}
In addition, \cite[Propositions 23.15(ii) and 23.16]{livre1} yield
$(\forall \gamma \in \left]0,\pinf\right[)(\forall n\in\NN)(\forall (x,v_1,\ldots, v_m) \in\KKK)$
\begin{alignat}{2}
\label{e:coco} 
J_{\gamma \UU_n \AAA}(x,v_1,\ldots,v_m) = \Big(J_{\gamma U_n A }(x+\gamma U_nz), 
\big(J_{\gamma U_{i,n}B^{-1}_i}(v_i-\gamma U_{i,n}r_i)\big)_{1\leq i\leq m}\Big).
\end{alignat}
It is shown in~\cite[Eq.~(3.12)]{plc6} and \cite[Eq.~(3.13)]{plc6} 
that under the condition~\eqref{e:fbfcond},
$\zer(\AAA + \BB )\neq\emp$. Moreover, \cite[\rm Eq.~(3.21)]{plc6} and 
~\cite[\rm Eq.~(3.22)]{plc6} yield
\begin{equation}
(\overline{x},\overline{v}_1,\ldots, \overline{v}_m) 
\in\zer(\AAA+ \BB)\Rightarrow \overline{x}\;\text{solves \eqref{primal12:41}}\;
\text{and}\;
(\overline{v}_1,\ldots, \overline{v}_m)\; \text{solves \eqref{dual12:41}}.
\end{equation}
Let us next set 
\begin{equation}
\label{e:e:cucucu}
(\forall n\in\NN)\quad
 \begin{cases}
 \xx_n = (x_n,v_{1,n},\ldots,v_{m,n})\\
\yy_n = (y_{1,n},y_{2,1,n},\ldots, y_{2,m,n})\\
\pp_n = (p_{1,n},p_{2,1,n},\ldots, p_{2,m,n})\\
\qq_n = (q_{1,n},q_{2,1,n},\ldots, q_{2,m,n})
 \end{cases}
\quad \text{and}\quad 
\begin{cases}
 \aaa_n =  (a_{1,n},a_{2,1,n},\ldots, a_{2,m,n})\\
\bb_n = (b_{1,n},b_{2,1,n},\ldots, b_{2,m,n})\\
\cc_n = (c_{1,n},c_{2,1,n},\ldots, c_{2,m,n}).
\end{cases}
\end{equation}
Then our assumptions imply that 
\begin{equation}
 \sum_{n\in\NN}||||\aaa_n|||| < \infty,\quad \sum_{n\in\NN}||||\bb_n|||| < \infty,\quad 
\text{and}\quad  \sum_{n\in\NN}||||\cc_n|||| < \infty.
\end{equation}
Furthermore, it follows from the definition of $\BB$, \eqref{e:coco}, and  \eqref{e:e:cucucu} 
that \eqref{e:sva2} can be rewritten in $\KKK$ as 
 \begin{equation}
\label{e:Tsengb}
(\forall n\in\NN)\quad
\begin{array}{l}
\left\lfloor
\begin{array}{l}
\yy_n= \xx_n-\gamma_n\UU_n(\BB\xx_n+\aaa_n)\\[1mm]
\pp_n = J_{\gamma_n \UU_n \AAA}\yy_n + \bb_n\\
\qq_n = \pp_n -\gamma_n\UU_n(\BB\pp_n+\cc_n)\\
\xx_{n+1} = \xx_n -\yy_n +\qq_n,
\end{array}
\right.\\[2mm]
\end{array}
\end{equation}
which is \eqref{e:Tsenga}. Moreover,  
every specific conditions in Theorem \ref{l:Tsenglemma} are satisfied.

\ref{t:Sv2i}: By Theorem \ref{l:Tsenglemma}\ref{Tsengi}, 
$\sum_{n\in\NN}||||\xx_n-\pp_{n} ||||^2 < \infty$.

\ref{t:Sv2iia}\&\ref{t:Sv2iib}: These assertions follow from Theorem \ref{l:Tsenglemma}\ref{Tsengii}.

\ref{t:Sv2iic}: Theorem \ref{l:Tsenglemma}\ref{Tsengii} shows that 
$(\overline{x},\overline{v}_1, \ldots, \overline{v}_m)\in\zer(\AAA+\BB)$. Hence, it follows
from~\cite[Eq~(3.19)]{plc6} that
 $(\overline{x},\overline{v}_1, \ldots, \overline{v}_m)$ satisfies the inclusions
\begin{equation}
\label{e:dualsol}
\begin{cases}
-\sum_{i = 1}^m L_{i}^*\overline{v}_{i} - C\overline{x}\in -z+A\overline{x} \\
(\forall i\in\{1,\ldots,m\})\;L_i\overline{x}- D^{-1}_i\overline{v}_{i}\in r_i+ B^{-1}_i\overline{v}_{i}.
\end{cases}
\end{equation}
For every $n\in\NN$ and every $i\in\{1,\ldots,m\}$, set
\begin{equation}
\label{e:sva21}
 \begin{cases}
  \widetilde{y}_{1,n} = x_n - \gamma_n U_n\big(Cx_n+ \sum_{i=1}^{m}L_{i}^*v_{i,n}\big)\\
\widetilde{p}_{1,n}=J_{\gamma_n U_{n}A}(\widetilde{y}_{1,n} + \gamma_nU_nz)\\
 \end{cases}
\quad \text{and}\quad
\begin{cases}
\widetilde{y}_{2,i,n} = v_{i,n} + \gamma_nU_{i,n}\big(L_ix_n - D_{i}^{-1}v_{i,n}\big)\\
\widetilde{p}_{2,i,n} = J_{\gamma_n U_{i,n}B_{i}^{-1}}(\widetilde{y}_{2,i,n} -\gamma_nU_{i,n}r_i).\\
\end{cases}
\end{equation}
Then, using \cite[Lemma 3.7]{Varm12}, we get 
\begin{equation}
 \widetilde{p}_{1,n} - p_{1,n}\to 0 \quad \text{and}\quad(\forall i\in \{1,\ldots,m\})
\quad \widetilde{p}_{2,i,n}-p_{2,i,n} \to 0,
\end{equation}
in turn, by \ref{t:Sv2i},\ref{t:Sv2iia}, and \ref{t:Sv2iib}, we obtain
\begin{equation}
\label{e:hoaquangoc}
\widetilde{p}_{1,n} - x_n\to 0, \quad \widetilde{p}_{1,n} \weakly \overline{x},
\quad \text{and}\quad(\forall i\in\{1,\ldots,m\})\quad
\widetilde{p}_{2,i,n}-v_{i,n} \to 0,\quad \widetilde{p}_{2,i,n}\weakly \overline{v}_i.
\end{equation}
Furthermore, we derive from
\eqref{e:sva21} that 
\begin{equation}
\label{e:dualsola}
(\forall n\in\NN)\quad
 \begin{cases}
 \gamma^{-1}_nU_{n}^{-1}(x_n-\widetilde{p}_{1,n})
-\sum_{i = 1}^m L_{i}^*v_{i,n} - Cx_n \in -z + A\widetilde{p}_{1,n}\\
(\forall i\in \{1,\ldots,m\})\;
\gamma^{-1}_nU_{i,n}^{-1}(v_{i,n}-\widetilde{p}_{2,i,n}) + L_ix_n  
- D^{-1}_i v_{i,n} \in r_i + B_{i}^{-1}\widetilde{p}_{2,i,n}.\\
 \end{cases}
\end{equation}
Since $A$ is uniformly monotone at $\overline{x}$, 
using \eqref{e:dualsol} and \eqref{e:dualsola},
there exists an increasing function
$\phi_A\colon\left[0,+\infty\right[\to\left[0,+\infty\right]$ 
vanishing only at $0$ such that, for every $n\in\NN$,  
\begin{alignat}{2}
 \phi_A(\|\widetilde{p}_{1,n} -\overline{x}\|) &\leqslant 
\scal{\widetilde{p}_{1,n}-\overline{x}}{\gamma^{-1}_nU^{-1}_n(x_{n}-\widetilde{p}_{1,n}) 
- \sum_{i = 1}^m(L_{i}^*v_{i,n} - L_{i}^*\overline{v}_{i}) - (Cx_n-C\bar{x})}\notag\\
&= \scal{\widetilde{p}_{1,n}-\overline{x}}{\gamma^{-1}_nU^{-1}_n(x_{n}-\widetilde{p}_{1,n})}- 
\sum_{i = 1}^m\scal{\widetilde{p}_{1,n}
-\overline{x}}{L_{i}^*v_{i,n} - L_{i}^*\overline{v}_{i}}-\chi_n\label{cz1},
\end{alignat}
where we denote $\big(\forall n\in\NN\big)\; \chi_n = \scal{\widetilde{p}_{1,n} -\bar{x}}{Cx_n -C\bar{x}}$.
Since $(B_{i}^{-1})_{1\leq i\leq m}$ are monotone, for every
$i\in\{1,\ldots,m\}$, we obtain
\begin{alignat}{2}
(\forall n\in\NN)\quad
0 &\leqslant \scal{\widetilde{p}_{2,i,n} - \overline{v}_{i}}{L_ix_{n} 
+\gamma^{-1}_nU^{-1}_{i,n}(v_{i,n}-\widetilde{p}_{2,i,n}) -L_i\overline{x} - (D_{i}^{-1}v_{i,n} - D_{i}^{-1}
\bar{v}_{i})}\notag\\
&= \scal{\widetilde{p}_{2,i,n} - \overline{v}_i}{L_i(x_{n} -\overline{x}) 
+\gamma^{-1}_nU^{-1}_{i,n}(v_{i,n}-\widetilde{p}_{2,i,n})} 
- \beta_{i,n},
\label{cz2}
\end{alignat}
where $\big(\forall n\in\NN\big)\;
 \beta_{i,n} = \scal{\widetilde{p}_{2,i,n} - \overline{v}_{i}}{D_{i}^{-1}v_{i,n} - D_{i}^{-1}
\bar{v}_{i}}$.
Now, adding~\eqref{cz2} from $i = 1$ to $i = m$ and~\eqref{cz1}, we obtain,
for every $n\in\NN$,
\begin{alignat}{2}\label{cz3}
 \phi_A(\|\widetilde{p}_{1,n} -\overline{x}\|) 
&\leq\scal{\widetilde{p}_{1,n}-\overline{x}}{\gamma^{-1}_nU^{-1}_n(x_{n}-\widetilde{p}_{1,n})}
+\scal{\widetilde{p}_{1,n}-\overline{x}}{ \sum_{i=1}^m L_{i}^*(\widetilde{p}_{2,i,n}-v_{i,n})}
\notag \\
&\quad+\sum_{i = 1}^m \scal{\widetilde{p}_{2,i,n} - \overline{v}_i}{L_i(x_{n} -\widetilde{p}_{1,n}) 
+\gamma^{-1}_nU^{-1}_{i,n}(v_{i,n}-\widetilde{p}_{2,i,n})}
-\chi_n - \sum_{i = 1}^{m}\beta_{i,n}.
\end{alignat}
For every $n\in\NN$ and every $ i\in \{1,\ldots,m\}$,
we expand $\chi_n$ and $\beta_{i,n}$ as
\begin{equation}
\label{e:expa}
\begin{cases}
\chi_n = \scal{x_n-\overline{x}}{ Cx_n- C\overline{x}}
 + \scal{\widetilde{p}_{1,n} -x_n}{ Cx_n- C\overline{x}},\\
\;\beta_{i,n} =
\scal{v_{i,n}-\overline{v}_i}{ D_{i}^{-1}v_{i,n}- D_{i}^{-1}\overline{v}_i}
 + \scal{\widetilde{p}_{2,i,n} - v_{i,n}}{ D_{i}^{-1}v_{i,n}- D_{i}^{-1}\overline{v}_i}.
\end{cases}
\end{equation}
By  monotonicity of $C$ and $(D_{i}^{-1})_{1\leq i\leq m}$,
\begin{equation}
(\forall n\in\NN)\quad
 \begin{cases}
\scal{x_n-\overline{x}}{ Cx_n- C\overline{x}}\geq 0,\\
(\forall i\in \{1,\ldots,m\})\; 
\scal{v_{i,n}-\overline{v}_i}{ D_{i}^{-1}v_{i,n}- D_{i}^{-1}\overline{v}_i}\geq 0.
 \end{cases}
\end{equation}
Therefore, for every $n\in\NN$, we derive from \eqref{e:expa} and \eqref{cz3} that
\begin{alignat}{2}
\label{e:concao}
 \phi_A(\|\widetilde{p}_{1,n} -\overline{x}\|)&\leq
 \phi_A(\|\widetilde{p}_{1,n} -\overline{x}\|) + \scal{x_n-\overline{x}}{ Cx_n- C\overline{x}}
+ \sum_{i=1}^m \scal{v_{i,n}-\overline{v}_i}{ D_{i}^{-1}v_{i,n}- D_{i}^{-1}\overline{v}_i}\notag\\
&\leq\scal{\widetilde{p}_{1,n}-\overline{x}}{\gamma^{-1}_n U^{-1}_n(x_{n}-\widetilde{p}_{1,n})}
+\scal{\widetilde{p}_{1,n}-\overline{x}}{ \sum_{i=1}^m L_{i}^*(\widetilde{p}_{2,i,n}-v_{i,n})}
\notag \\
&\quad+\sum_{i = 1}^m \scal{\widetilde{p}_{2,i,n} - \overline{v}_i}{ L_i(x_{n} -\widetilde{p}_{1,n}) 
+\gamma^{-1}_nU^{-1}_{i,n}(v_{i,n}-\widetilde{p}_{2,i,n})}\notag\\
&\quad - \scal{\widetilde{p}_{1,n} -x_n}{ Cx_n- C\overline{x}} 
-\sum_{i=1}^m\scal{\widetilde{p}_{2,i,n} - v_{i,n}}{ D_{i}^{-1}v_{i,n}- D_{i}^{-1}\overline{v}_i}.
\end{alignat}
We set
\begin{equation}
 \zeta = \max_{1\leq i\leq m} \sup_{n\in\NN}\{\|x_n -\overline{x}||,\|\widetilde{p}_{1,n} -\overline{x}\|, 
\|v_{i,n}-\overline{v}_i\|, \|\widetilde{p}_{2,i,n} - \overline{v}_i\| \}. 
\end{equation}
Then it follows from \ref{t:Sv2iia}, \ref{t:Sv2iib}, and \eqref{e:hoaquangoc}
that $\zeta < \infty$, and  from \cite[Lemma 2.1(ii)]{Guad2012} that
$(\forall n\in\NN)\;\|\gamma^{-1}_nU^{-1}_n\| \leq (\varepsilon\alpha)^{-1}$ and
$(\forall i \in\{1,\ldots,m\})\; \| \gamma^{-1}_nU_{i,n}^{-1}\| \leq (\varepsilon\alpha)^{-1}$. 
Therefore, using the Cauchy-Schwarz inequality, and the Lipschitzianity 
of $C$ and $(D^{-1}_i)_{1\leq i\leq m}$,
we derive from \eqref{e:concao} that
\begin{alignat}{2}\label{cz3e}
 \phi_A(\|\widetilde{p}_{1,n} -\overline{x}\|) 
&\leq (\varepsilon\alpha)^{-1}\zeta\|x_{n}-\widetilde{p}_{1,n}\|
+\zeta\sum_{i = 1}^m\big(\|L_i\|\; \|x_{n} -\widetilde{p}_{1,n}\| 
+ (\varepsilon\alpha)^{-1}\| v_{i,n}-\widetilde{p}_{2,i,n}\|\big)\notag \\
&\quad+\zeta\bigg(\sum_{i=1}^m \|L_{i}^*\| \|\widetilde{p}_{2,i,n}-v_{i,n}\|
+\nu_0\|\widetilde{p}_{1,n} -x_n\| 
+ \sum_{i=1}^m \nu_i \|\widetilde{p}_{2,i,n} - v_{i,n}\|\bigg) \notag\\
&\to 0.
\end{alignat}
We deduce from~\eqref{cz3e} and \eqref{e:hoaquangoc} 
that $\phi_A(\|\widetilde{p}_{1,n} -\overline{x}\|)\to 0$, which implies that 
$\widetilde{p}_{1,n}\to \overline{x}$. In turn,
$x_n \to \overline{x}$ and $p_n\to \overline{x}$.
Likewise, if $C$
is uniformly monotone at $\overline{x}$, 
there exists an increasing function $\phi_C\colon\left[0,+\infty\right[\to\left[0,+\infty\right]$ that vanishes
only at $0$ such that
\begin{alignat}{2}\label{cz3ef}
 \phi_C(\|x_{n} -\overline{x}\|) 
&\leq (\varepsilon\alpha)^{-1}\zeta\|x_{n}-\widetilde{p}_{1,n}\|
+\zeta\sum_{i = 1}^m\big(\|L_i\|\; \|x_{n} -\widetilde{p}_{1,n}\| 
+ (\varepsilon\alpha)^{-1}\| v_{i,n}-\widetilde{p}_{2,i,n}\|\big)\notag \\
&\quad+\zeta\bigg(\sum_{i=1}^m \|L_{i}^*\| \|\widetilde{p}_{2,i,n}-v_{i,n}\|
+\nu_0\|\widetilde{p}_{1,n} -x_n\| + \sum_{i=1}^m \nu_i \|\widetilde{p}_{2,i,n} - v_{i,n}\|\bigg) \notag\\
&\to 0,
\end{alignat}
in turn,  $x_n \to \overline{x}$ and $p_n\to \overline{x}$.

\ref{t:Sv2iid}: Proceeding
as in the proof of \ref{t:Sv2iic}, we obtain the conclusions.
\end{proof}

\noindent{{\bfseries Acknowledgement.}}
I thank Professor Patrick L. Combettes for bringing this problem 
to my attention and for helpful discussions.


\begin{thebibliography}{99}

\bibitem{plc2010}
H. Attouch, L. M. Brice\~no-Arias, and P. L. Combettes, 
A parallel splitting method for coupled monotone
inclusions, 
{\em SIAM J. Control Optim.}, vol. 48, pp. 3246--3270, 2010.

\bibitem{livre1}
 H. H. Bauschke and P. L. Combettes, 
{\em Convex Analysis and Monotone Operator Theory in Hilbert Spaces}.
Springer, New York, 2011.

\bibitem{Bonn95} 
J. F. Bonnans, J. Ch. Gilbert, C. Lemar\'echal, and 
C. A. Sagastiz\'abal, 
A family of variable metric proximal methods,
{\em Math. Programming}, vol. 68, pp. 15--47, 1995.

\bibitem{siop2}
L. M. Brice\~no-Arias and P. L. Combettes, 
A monotone$+$skew splitting model for composite monotone 
inclusions in duality, {\em SIAM J. Optim.}, vol. 21,
pp. 1230--1250, 2011. 


\bibitem{Burke99} 
J. V. Burke and M. Qian, A variable metric proximal point algorithm for monotone operators,
{\em SIAM J. Control Optim.,} 
vol.  37,  pp. 353--375, 1999.

\bibitem{Burke00}
J. V. Burke and M. Qian,
On the superlinear convergence of the variable metric 
proximal point algorithm using Broyden and BFGS matrix secant updating,
{\em Math. Program.}, vol. 88, pp. 157--181, 2000.
 
\bibitem{plc7}
P. L. Combettes,
Quasi-Fej\'erian analysis of some optimization algorithms,
 {\em in Inherently Parallel Algorithms in Feasibility and Optimization and Their Applications}
(D. Butnariu, Y. Censor, and S. Reich, Eds.), pp. 115--152. New York: Elsevier, 2001.

\bibitem{Siop1}
P. L. Combettes,
Solving monotone inclusions via compositions 
of nonexpansive averaged operators,
{\em Optimization},
vol. 53, pp. 475--504, 
2004.

\bibitem{plc6}
P. L. Combettes and J.-C. Pesquet, 
Primal-dual splitting algorithm for solving
inclusions with mixtures of composite, Lipschitzian, and 
parallel-sum type monotone operators,
{\em Set-Valued Var. Anal.},
vol. 20, pp. 307--330, 2012.


\bibitem{Guad2012}
P. L. Combettes and B. C. V\~u,
Variable metric quasi-Fej\'er monotonicity,
{\em Nonlinear Anal.,} to appear.
\url{http://www.sciencedirect.com/science/article/pii/S0362546X12003616}

\bibitem{Varm12}
P. L. Combettes and B. C. V\~u, 
Variable metric forward-backward splitting with applications to 
monotone inclusions in duality, 
{\em Optimization,} to appear.\\
\url{http://arxiv.org/abs/1206.6791}

\bibitem{mercier79}
B. Mercier, {\em Topics in Finite Element Solution of 
Elliptic Problems }
(Lectures on Mathematics, no. 63).
Tata Institute of Fundamental Research, Bombay, 1979.

\bibitem{Lemarechal97a}
C. Lemar\'echal and C.  Sagastiz\'abal,
Variable metric bundle methods: 
from conceptual to implementable forms,
{\em  Math. Program.},  vol. 76, pp. 393--410, 1997.

\bibitem{solodov09}
P. A. Lotito, L. A.  Parente, and M. V. Solodov, 
A class of variable metric
decomposition methods for monotone variational inclusions,
{\em J. Convex
Anal.}, vol. 16, pp. 857--880, 2009.

\bibitem{solodov08}
L. A. Parente, P. A. Lotito,  and M. V. Solodov,
A class of inexact variable metric proximal point algorithms, 
{\em SIAM J. Optim.}, vol. 19, pp. 240--260, 2008.

\bibitem{Qi95}
L. Qi and X. Chen,
A preconditioning proximal Newton's method for nondifferentiable convex
optimization,
{\em Math. Program.}, vol. 76, pp. 411--430, 1995.

\bibitem{Rock76}
R. T. Rockafellar,
 Monotone operators and the proximal point algorithm, 
{\em SIAM J. Control
Optimization}, vol. 14, pp. 877--898, 1976.

\bibitem{Tseng91}
P. Tseng, 
Applications of a splitting algorithm to
decomposition in convex programming and variational
inequalities, 
{\em SIAM J. Control Optim.}, vol. 29, pp. 119--138, 1991.

\bibitem{Tseng00}
P. Tseng,
A modified forward-backward splitting method for maximal monotone mappings, 
{\em SIAM J. Control Optim.}, vol. 38, pp. 431--446, 2000.


\end{thebibliography}
\end{document}